\newtheorem*{thm*}{Theorem}
\newtheorem{thm}{Theorem}[section]
\newtheorem{prop}[thm]{Proposition}
\newtheorem{lemma}[thm]{Lemma}
\theoremstyle{definition}
\newtheorem{rem}[thm]{Remark}
\newcommand{\Cb}{{\mathbb C}}
\newcommand{\Pb}{{\mathbb P}}
\newcommand{\Rb}{{\mathbb R}}
\newcommand{\Zb}{{\mathbb Z}}
\newcommand{\rank}{{\rm rank}}
\newcommand{\Hom}{\textrm{Hom}}
\newcommand{\Nef}{\N{E}{F}}
\theoremstyle{definition}
\renewcommand{\Nef}{\text{Nef}}
\newcommand{\Psef}{\text{Psef}}
\newcommand{\NE}{\overline{\text{NE}}}
\begin{document}
\title{Some surfaces with non-polyhedral nef cones}
\author{Ashwath Rabindranath}
\address{University of Michigan\\ Department of Mathematics\\ Ann Arbor, MI48109\\USA}
\email{ashwathr@umich.edu}
\begin{abstract} We study the nef cones of complex smooth projective surfaces and give a sufficient criterion for them to be non-polyhedral. We use this to show that the nef cone of $C \times C$, where $C$ is a complex smooth projective curve of genus at least $2$, is not polyhedral. \end{abstract}
\maketitle
\section{Introduction}
\noindent There has been a great deal of interest in understanding the various positive cones of curves and divisors on algebraic varieties. Several cases have been analyzed, including symmetric products of curves in \cite{Kou}, \cite{Pac}, abelian varieties in \cite{Bau}, \cite{DELV}, and holomorphic symplectic varieties in \cite{HT}, \cite{BHT}. The main result of this paper is the following theorem.

\begin{thm}\label{thm1} If $C$ is a smooth projective curve over $\Cb$ of genus $g \geq 2$, the nef cone of $C \times C$ is not polyhedral. \end{thm}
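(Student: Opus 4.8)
The plan is to work inside the Néron--Severi space $N^1(C\times C)_{\Rb}$ equipped with the intersection pairing and to show that the nef cone meets the boundary of the positive cone in a genuinely curved set. First I would record the numerical data: writing $f_1,f_2$ for the two ruling classes and $\Delta$ for the diagonal, one has $f_1^2=f_2^2=0$, $f_1\cdot f_2=\Delta\cdot f_1=\Delta\cdot f_2=1$ and $\Delta^2=2-2g$, so the Gram determinant of $(f_1,f_2,\Delta)$ equals $2g\neq 0$. Hence $\rho(C\times C)\geq 3$, and by the Hodge index theorem the pairing has signature $(1,\rho-1)$ with $\rho-1\geq 2$. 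Let $\Cc^+$ be the component of $\{D:D^2>0\}$ containing the ample classes; it is a strictly convex (``round'') cone, and since a nef class satisfies $D^2\geq 0$ and lies in the forward direction we have $\Nef(C\times C)\subseteq\overline{\Cc^+}$.

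The key reduction is that two elements of the closed forward cone pair non-negatively, so for any $D\in\overline{\Cc^+}$ and any curve class $C$ with $C^2\geq 0$ one automatically has $D\cdot C\geq 0$. Thus the only classes imposing genuine conditions are those of irreducible curves $E$ with $E^2<0$, and
\[
\Nef(C\times C)=\overline{\Cc^+}\cap\bigcap_{E}\{D:D\cdot E\geq 0\},
\]
the intersection ranging over the numerical classes of irreducible negative curves. Note that $\Delta$ is always such a curve, since $\Delta^2=2-2g<0$.

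Now I would split into two cases according to whether there are finitely or infinitely many numerical classes of irreducible negative curves. If there are infinitely many, then, since an irreducible curve of negative self-intersection spans an extremal ray of $\NE(C\times C)$, the Mori cone has infinitely many extremal rays and is therefore non-polyhedral; its dual $\Nef(C\times C)$ is then non-polyhedral as well. If there are only finitely many, say $E_1,\dots,E_k$, I would produce a single null class $v\in\partial\Cc^+$ (so $v^2=0$ and $v$ forward) with $v\cdot E_i>0$ for every $i$. Such a $v$ lies on the round boundary of $\Cc^+$ and strictly inside every wall $E_i^{\perp}$, so by continuity an entire open neighborhood of $v$ in $\partial\Cc^+$ consists of nef classes; each of these is an extremal ray of $\overline{\Cc^+}$, hence of the smaller cone $\Nef(C\times C)$, giving a genuine arc of extremal rays and forcing non-polyhedrality. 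This curved piece of $\partial\Cc^+$ inside the nef cone is exactly the configuration detected by the sufficient criterion established earlier in the paper.

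The step I expect to be the main obstacle is the finite case: rigorously ruling out that the finitely many walls $E_i^{\perp}$ conspire to cut $\overline{\Cc^+}$ down to a polyhedral (for instance simplicial) subcone whose extremal rays all happen to lie on $\partial\Cc^+$ --- equivalently, guaranteeing the existence of the null class $v$ that is strictly positive against every negative curve. Here the caps $\{v\in\partial\Cc^+:v\cdot E_i\leq 0\}$ must be shown not to cover all null directions, and this is where the specific geometry of $C\times C$ enters: the values $\Delta^2=2-2g$ together with the two isotropic nef ruling classes $f_1,f_2\in\partial\Cc^+$ should be used to locate $v$ and to verify that a full arc of null nef classes survives. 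I would also need to confirm that curves $C$ with extra correspondences --- where $\rho(C\times C)>3$ --- fall into the infinite case, by exhibiting infinitely many distinct negative graph classes coming from $\mathrm{End}(\mathrm{Jac}\,C)$, so that the dichotomy covers every curve of genus $g\geq 2$ uniformly.
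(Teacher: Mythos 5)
Your overall strategy --- the dichotomy on the number of numerical classes of irreducible negative curves, with the finite case handled by exhibiting an arc of isotropic nef classes --- is sound and genuinely different from the paper's argument, but as written it stops exactly at the load-bearing step. In the finite case you say you ``would produce'' a forward null class $v$ with $v\cdot E_i>0$ for every irreducible negative curve class $E_i$, and you correctly flag this as the main obstacle, but you never produce it; without $v$ the proof is incomplete, since finitely many walls can in general cut the round cone $\overline{\Cc^+}$ down to a polyhedral subcone all of whose extremal rays are null (your own inscribed-polygon picture). The missing step is, however, a one-line observation rather than a serious obstacle: take $v=f_1$, the class of a fiber of the first projection. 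If $E$ is an irreducible curve with $E\cdot f_1=0$, then $E$ meets no general fiber of $p_1$, so $p_1(E)$ is a point and $E$ is itself a fiber, whence $E^2=0$; therefore every irreducible curve with $E^2<0$ satisfies $E\cdot f_1>0$. With only finitely many such classes, continuity gives an open neighborhood of $f_1$ in the forward null cone consisting of classes positive against all $E_i$, hence nef by your description of $\Nef(C\times C)$ as $\overline{\Cc^+}$ cut by the walls $E_i^{\perp}$; since $\rho\geq 3$ this neighborhood contains infinitely many rays, each extremal in $\overline{\Cc^+}$ and hence in the subcone $\Nef(C\times C)$. (Your closing worry about curves with $\rho(C\times C)>3$ is unnecessary: the dichotomy is exhaustive as stated, and you never need to decide which case a given curve falls into.)

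Once completed this way, your argument takes a different and in fact more elementary route than the paper's. The paper proves a bespoke convexity criterion --- a nonzero boundary class $e$ of $\NE(X)$ with $e^2=0$, together with a direction $f$ satisfying $e\cdot f=0$ and $(e+\Rb f)\cap\NE(X)=\{e\}$, forces non-polyhedrality --- and then verifies the hypothesis for $e=e_2$, $f=\delta$ by invoking Vojta's family of nef classes $a_1e_1+a_2e_2\pm\delta$, which is the real geometric input there. Your route replaces Vojta's theorem by the soft Lorentzian geometry of $N^1$ together with two standard facts (forward classes pair non-negatively; an irreducible curve of negative self-intersection spans an extremal ray of $\NE$). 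What you lose is the quantitative information about which classes $e_2+q\delta$ fail to be pseudoeffective; what you gain is a visibly more general statement, applying to any smooth projective surface of Picard rank at least $3$ that admits a fibration with irreducible fibers.
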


\noindent We address the cases when $g < 2$. If $C$ has genus $0$, it is isomorphic to $\Pb^1$. The nef cone of $\Pb^1 \times \Pb^1$ is rational polyhedral and is equal to $$\{ (x, y) \in \Rb^2 : x \geq 0, y \geq 0 \}.$$ If $C$ is a curve of  genus $1$ and $h$ is an ample class on $C$, the nef cone of $C \times C$ is precisely $$\{ \alpha \in N^1(C \times C)_\Rb : (\alpha \cdot \alpha) \geq 0, (\alpha \cdot h) \geq 0 \}.$$ In this case, the nef cone is not polyhedral. 
\vspace{0.1in}

\noindent In section \ref{sec2}, we prove a sufficient criterion for the nef cone of a surface to be non-polyhedral. In section \ref{sec3}, we use this criterion to prove that the nef cone of $C \times C$ is not polyhedral for $C$ a complex smooth projective curve of genus at least $2$.

\subsection*{Acknowledgements} I am grateful to my advisor Mircea Musta\c{t}\u{a} for his support and guidance. I also thank Harold Blum, Felipe Perez and Phil Tosteson for useful conversations. 
\section{Criterion for non-polyhedral nef cones}\label{sec2}

\noindent In this section we prove a sufficient criterion for nef cones to not be polyhedral. We begin by fixing some notation. For any smooth projective variety $X$, we denote by $N^1(X)$ the free and finitely generated $\Zb$-module of numerical equivalence classes of divisors on $X$. Let $\rho(X)$ be its rank. We use $\equiv$ to denote numerical equivalence. Let $N^1(X)_\Rb:= N^1(X) \otimes_\Zb \Rb$. The closed convex cone generated by numerical classes of nef divisors is the \textit{nef cone}, denoted by $\text{Nef}(X)$. The closed convex cone generated by numerical classes of effective divisors is the \textit{pseudoeffective cone} denoted by $\Psef(X)$. 
\vspace{0.1in} 

\noindent In what follows, we assume that $X$ is a smooth projective surface. For such $X$, $N^1(X)$ is equipped with the usual intersection form and  $\Psef(X)$ is the same as the \textit{Mori cone} (denoted by $\NE(X)$) which is the dual of the nef cone under the intersection product. Recall that a cone $\sigma$ is said to be \textit{polyhedral} if it is the positive span of finitely many vectors. A theorem of Farkas (\cite{F}, Pg. 11) tells us that a cone $\sigma$ is polyhedral if and only if $\sigma^\vee$ is polyhedral. 
\vspace{0.1in}

\noindent Suppose $\rho(X) \geq 3$ and pick an orthogonal basis $\{h, f_1, \ldots, f_{\rho(X) - 1}\}$ of $N^1(X)_\Rb$ such that $h$ is ample, $(h \cdot h) = 1$ and $(f_i \cdot f_i) = -1$ for $1 \leq i \leq \rho(X) - 1$. The existence of such a basis follows from the Hodge index theorem.

\begin{prop}\label{crit} For $X$ as above, if there exist $e$ and $f$ such that,
\begin{enumerate}
\item\label{a} $0 \neq e$ is a boundary class of $\NE(X)$ such that $(e \cdot e) = 0$,
\item\label{b} $0 \neq f$ is a class in the linear span of $\{f_1, \ldots, f_{\rho(X) - 1} \}$ such that $(e \cdot f) = 0$ and $(e + \Rb f) \cap \NE(X) = \{e\},$
\end{enumerate} then $\Nef(X)$ is not polyhedral.
\end{prop}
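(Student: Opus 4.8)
The plan is to pass to the dual cone and prove instead that the Mori cone $\NE(X)$ is not polyhedral; by the theorem of Farkas quoted above together with the duality $\Nef(X) = \NE(X)^\vee$, this is equivalent to the desired statement. The geometric input I will use is the closed ``forward'' light cone $P := \{x \in N^1(X)_\Rb : (x \cdot x) \geq 0,\ (x \cdot h) \geq 0\}$. Two facts about it are needed. First, $P \subseteq \NE(X)$: a Riemann--Roch computation shows that any class $x$ with $(x\cdot x) > 0$ and $(x\cdot h) > 0$ has $h^0(mx) > 0$ for $m \gg 0$, hence a positive effective multiple, so the open positive cone, and therefore its closure $P$, lies in $\Psef(X) = \NE(X)$. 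Second, because the intersection form has signature $(1, \rho(X)-1)$ with $\rho(X) - 1 \geq 2$, the cone $P$ is round: at each boundary class $e \neq 0$ with $(e \cdot e) = 0$ the supporting hyperplane of $P$ is unique and equals $e^\perp$.

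Now I argue by contradiction: suppose $\NE(X)$ is polyhedral. Since $e \neq 0$ is a boundary class by hypothesis \ref{a}, it lies on some facet, so there is a linear functional $\ell \geq 0$ on $\NE(X)$ with $\ell(e) = 0$ cutting out that facet. Because $P \subseteq \NE(X)$ and $e \in \partial P$, the functional $\ell$ also supports $P$ at $e$; by the uniqueness of the supporting hyperplane of $P$ at $e$, the kernel of $\ell$ must be $e^\perp$, so $\ell = c\,(e \cdot -)$ for some $c > 0$ (here $(e \cdot h) > 0$ since $e$ is a nonzero pseudoeffective class and $h$ is ample). Thus every facet of $\NE(X)$ containing $e$ has supporting hyperplane $e^\perp$. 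As distinct facets of a polyhedral cone have distinct supporting hyperplanes, there is exactly one such facet, namely $F := \NE(X) \cap e^\perp$, and moreover $e$ lies in its relative interior: a point on the relative boundary of $F$ would lie on a lower-dimensional face, hence on a second facet with a different supporting hyperplane, which is impossible.

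Finally I bring in $f$. By hypothesis \ref{b} we have $f \neq 0$ and $(e \cdot f) = 0$, so $f \in e^\perp = \mathrm{span}(F)$. Since $e \in \mathrm{relint}(F)$ and $F$ is full-dimensional inside $e^\perp$, the classes $e + t f$ lie in $F \subseteq \NE(X)$ for all sufficiently small $|t|$. In particular $(e + \Rb f) \cap \NE(X) \supsetneq \{e\}$, contradicting hypothesis \ref{b}. Therefore $\NE(X)$ is not polyhedral, and by Farkas neither is $\Nef(X)$.

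I expect the main obstacle to be the two structural facts about $P$ in the first paragraph—most delicately, the uniqueness of the supporting hyperplane of $P$ at the isotropic ray $\Rb_{\geq 0} e$. This is exactly where the hypothesis $\rho(X) \geq 3$ is essential: the negative-definite part must have dimension $\geq 2$ for $P$ to be round, whereas in signature $(1,1)$ the light cone is itself a two-dimensional polyhedral cone and the argument collapses. The role of condition \ref{b} is precisely to certify that $e$ is \emph{not} an interior point of a facet, which is the one configuration in which a boundary isotropic class could persist on a polyhedral $\NE(X)$.
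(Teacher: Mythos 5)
Your proof is correct, but it reaches the contradiction by a genuinely different route than the paper. Both arguments rest on the same two inputs: the Riemann--Roch fact that the forward light cone $P=\{x:(x\cdot x)\ge 0,\ (x\cdot h)\ge 0\}$ lies in $\Psef(X)=\NE(X)$, and the Lorentzian signature of the intersection form. The paper deploys these only at the very end of a two-dimensional slicing argument: it intersects $\NE(X)$ with the affine plane spanned by $e+\Rb f$ and the line through $e$ and $h$, notes that a polyhedral $\NE(X)$ would make this slice a convex polygon with $e$ as a vertex, produces an explicit segment from $e$ to $h'+mf$ lying outside the cone except at $e$, and then computes $(P_t\cdot P_t)>0$ for points $P_t$ on that segment near $e$, so that $P_t$ is big and hence interior to $\NE(X)$ --- a contradiction. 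You instead feed the light cone in at the start and argue by duality in the ambient space: self-duality of $P$ and the reverse Cauchy--Schwarz inequality pin down every facet-defining functional of $\NE(X)$ vanishing at $e$ as a positive multiple of $(e\cdot-)$, so the unique facet through $e$ is $\NE(X)\cap e^{\perp}$ with $e$ in its relative interior, and then $f\in e^{\perp}$ gives $e+tf\in\NE(X)$ for small $|t|$, contradicting condition (2). Your version is more conceptual --- it identifies exactly what a polyhedral $\NE(X)$ would have to look like near $e$, namely flat and tangent to the light cone along $e^{\perp}$ --- while the paper's is more elementary and self-contained, needing only the planar polygon picture and one intersection-number computation rather than the facial structure of general polyhedral cones. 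One small inaccuracy in your closing remark: the uniqueness of the supporting hyperplane of $P$ at an isotropic ray, and its identification with $e^{\perp}$, already hold in signature $(1,1)$, where the only supporting line at $e$ through the apex is $\Rb e=e^{\perp}$; what actually fails when $\rho(X)=2$ is that the hypotheses of the proposition become vacuous, since there $h^{\perp}\cap e^{\perp}=\{0\}$ forces $f=0$.
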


\begin{proof} Consider the lines  $\ell_1 := \{e +  sf : s \in \Rb\}$ and $\displaystyle \ell_2 := \{ te + (1-t)(h \cdot e)h : t \in \Rb \}$. These lines are distinct because otherwise $e + sf$ would equal  $h$ for some value of $s$, which is impossible since $(e + \Rb f) \cap \NE(X) = \{e\}$. The affine $2$-plane $P$ spanned by $\ell_1$ and $\ell_2$ is contained in the affine hyperplane $$H:= \{ v \in N^1(X) : (v \cdot h) = (e \cdot h) \}.$$ Since $0 \neq e \in \NE(X)$, we know that $(e \cdot h) > 0$ by Kleiman's criterion. The image of $\NE(X) \backslash \{0\}$ in $\Pb(N^1(X)_\Rb)$ is closed hence compact. Since $H$ maps homeomorphically onto its image in $\Pb(N^1(X)_\Rb)$, we conclude that $H \cap \NE(X)$ is compact. It follows that $P \cap \NE(X)$ is compact, being a closed subset of $H \cap \NE(X)$. 
\vspace{0.1in}

\noindent Assume that $\NE(X)$ is a polyhedral cone. It follows that $P \cap \NE(X)$ must be a convex polygon. Since $\ell_1$ intersects this convex polygon at precisely one point, $e$ must be a vertex. The class $h' := (e \cdot h)h$ lies in the interior of this polygon, being an ample class. Since $(e + \Rb f) \cap \NE(X) = \{e\}$, neither edge of the polygon emanating from $e$ is contained in $(e + \Rb f)$. Hence, $(h' + \Rb f)$ is not parallel to either of these edges and it must intersect both edges at precisely one point each, say $h' + \chi_if$ for $i = 1,2$. Picking $m >  \max(|\chi_1|, |\chi_2|)$ we see that the segment $\ell_3$ joining $e$ and $h' + mf$ lies entirely outside $\NE(X)$, aside from $e$. A general point on this segment is $$P_t := te + (1-t)(h' + mf) \text{ for } 0 \leq t \leq 1.$$ We compute the self-intersection \begin{align*} (P_t \cdot P_t) &= t^2(e \cdot e) + (1-t)^2(h' \cdot h') + (1-t)^2m^2(f \cdot f) + 2t(1-t)(e \cdot h') \\ &=  (1-t)( (1-t)(h' \cdot h') + (1-t)m^2(f \cdot f) + 2t(e \cdot h')) \end{align*} The term 
$(1-t)(h' \cdot h') + (1-t)m^2(f \cdot f) + 2t(e \cdot h'))$ is positive for $t = 1$ since $(e \cdot h') > 0$ because $e$ is pseudoeffective and $h'$ is ample. Hence for $t$ slightly less than $1$, this term is positive forcing $(P_t \cdot P_t)$ to be positive. Now this implies that either $P_t$ or $-P_t$ is big. Since $(P_t \cdot h) = (e \cdot h) > 0$, it follows that $P_t$ is big and contained in the interior of $\NE(X)$, a contradiction! We thus conclude that $\NE(X)$ is not polyhedral, hence $\Nef(X)$ is not polyhedral as well. \end{proof}

\section{Nef cone of $C \times C$}\label{sec3}

\noindent For the remainder of this paper, we focus on a fixed complex smooth projective curve $C$ of genus $g \geq 2$  Let $\Delta \subset C \times C$ be the diagonal and let $J$ be the Jacobian of $C$. Let $p_1, p_2 : C \times C \to  C$ be the projection morphisms.  Let $e_i$ be the numerical class of a fiber of $p_i$ and $\delta := \Delta - e_1 - e_2$. Recall  (see \cite{Rob}, Section 1.5) that \begin{align}\label{facts} (e_1 \cdot e_1) = (e_2 \cdot e_2)  = (e_1 \cdot \delta) = (e_2 \cdot \delta) = 0, \text{ }(e_1 \cdot e_2) = 1, \text{ and  }  (\delta \cdot \delta) = -2g. \end{align} Furthermore, we have \begin{align*} N^1(C \times C) &= p_1^*N^1(C) \oplus p_2^*N^1(C) \oplus \Hom(J, J) \\  & = \Zb \oplus \Zb \oplus \Hom(J, J). \end{align*} Since $\rank_\Zb(\Hom(J,J)) \geq 1$, it follows that $\rho(C \times C) \geq 3$. It is well known that the Mori cone is a full-dimensional cone in $N^1(C \times C)_\Rb$. 

\begin{lemma}\label{tang} For $\nu \in \NE(C \times C)$, we have $(e_2 \cdot \nu) \geq 0$. 
 \end{lemma}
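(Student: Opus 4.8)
The plan is to reduce the inequality to the case of irreducible curves and then interpret the intersection number geometrically. Since $\NE(C \times C)$ is by definition the closed convex cone generated by the classes of irreducible curves, and the intersection form is bilinear and continuous, it suffices to prove $(e_2 \cdot Z) \geq 0$ for every irreducible curve $Z \subset C \times C$; the inequality for a general $\nu \in \NE(C \times C)$ then follows by taking non-negative combinations and limits. Equivalently, the whole lemma amounts to the assertion that $e_2$ is a nef class, which is what I will establish.

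The key observation is that $e_2$ is the pullback $p_2^{*}q$ of the class $q$ of a point on $C$, since a fiber of $p_2$ is $C \times \{\mathrm{pt}\}$. On the smooth projective curve $C$ any point has positive degree, so $q$ is ample, in particular nef. For an irreducible curve $Z$ the projection formula gives $(e_2 \cdot Z) = (q \cdot (p_2)_{*}Z)$. If $Z$ is contained in a fiber of $p_2$ then $(p_2)_{*}Z = 0$ and the intersection is $0$; otherwise $p_2|_{Z}$ is finite, $(p_2)_{*}Z$ is a positive multiple of the point class, and $(q \cdot (p_2)_{*}Z) \geq 0$ because $q$ is nef. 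In both cases $(e_2 \cdot Z) \geq 0$, as required.

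I do not anticipate a genuine obstacle: the entire content is the elementary fact that a fiber class of a fibration over a curve is nef. The only points that require care are the correct use of the projection formula and the reduction from an arbitrary element of the Mori cone to classes of irreducible curves, which relies on the intersection pairing being continuous together with the description of $\NE(C \times C)$ as the closure of the cone spanned by effective curves.
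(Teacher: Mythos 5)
Your proof is correct and follows the same route as the paper: the whole content is that $e_2$, being the class of a fiber $C \times \{P\}$ of $p_2$ (equivalently $p_2^{*}$ of an ample point class), is nef, and nef classes pair nonnegatively with the Mori cone. The paper states this in one line; you have simply supplied the standard details (reduction to irreducible curves and the projection formula) that the paper leaves implicit.
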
 % In particular, $$\{ \nu \in \Npsef(C \times C) : \iota_1^*\nu = 0\} \subset \Neff(C \times C).$$ 
\begin{proof} This is immediate since $e_2 \equiv C \times \{P\}$ and is nef, hence is nonnegative on $\NE(C \times C)$.
\end{proof}

\noindent We need the following result of Vojta.
\begin{prop}[Proposition 1.5, \cite{vojta}]\label{vojta} Let $Y(r,s) := a_1e_1 + a_2e_2 + a_3\delta$ where $a_1 = \displaystyle \sqrt{\frac{g+s}{r}}$, $a_2 = \sqrt{(g+s)r}$ and $a_3 = \pm 1$, for $r, s \in \Rb_{> 0}$. If \begin{align*} r > \frac{(g + s)(g - 1)}{s}, \end{align*} then $Y(r,s)$ is nef. \end{prop}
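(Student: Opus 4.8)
The plan is to verify nefness directly from Kleiman's criterion: a class $Y$ on the surface $C\times C$ is nef exactly when $(Y\cdot D)\ge 0$ for every irreducible curve $D$, since these classes generate $\NE(C\times C)$ and $(Y\cdot{-})$ is linear and continuous. First I would dispose of the fibers: writing any curve class as $\alpha e_1 + \beta e_2 + \gamma\delta$, the relations (\ref{facts}) give $(Y(r,s)\cdot e_1)=a_2>0$ and $(Y(r,s)\cdot e_2)=a_1>0$, so fiber classes present no problem. For any other irreducible curve $D$ both projections $p_i\rs D$ are finite and surjective, and since $(D\cdot e_2)=\alpha$ and $(D\cdot e_1)=\beta$ are their respective degrees, we have $\alpha,\beta\ge 1$. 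Expanding with (\ref{facts}) yields
\begin{align*}
(Y(r,s)\cdot D) = a_2\alpha + a_1\beta - 2g\,a_3\gamma,
\end{align*}
so, as $a_3=\pm1$, it suffices to prove the single inequality $a_2\alpha + a_1\beta \ge 2g|\gamma|$, which then covers both signs of $a_3$ simultaneously.

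The geometric content I would extract from irreducibility is a bound on $|\gamma|$. By adjunction on $C\times C$, using $K_{C\times C}\equiv(2g-2)(e_1+e_2)$ together with $(D\cdot D)=2\alpha\beta-2g\gamma^2$, the arithmetic genus satisfies
\begin{align*}
p_a(D) = 1 + \alpha\beta - g\gamma^2 + (g-1)(\alpha+\beta).
\end{align*}
Since $D$ is irreducible (hence integral and connected), $p_a(D)=h^1(D,\Oc_D)\ge 0$, giving the key estimate $g\gamma^2\le 1+\alpha\beta+(g-1)(\alpha+\beta)$. Because $g\ge 2$ forces $(g-1)^2\ge 1$, this sharpens to the clean factored form $g\gamma^2 \le (\alpha+g-1)(\beta+g-1)$.

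It then remains to show that $a_2\alpha+a_1\beta\ge 2g|\gamma|$ follows from this bound together with the hypothesis $r>(g+s)(g-1)/s$. Substituting $|\gamma|\le\sqrt{(\alpha+g-1)(\beta+g-1)/g}$, squaring (both sides being positive), and using $a_1^2=(g+s)/r$, $a_2^2=(g+s)r$, $a_1a_2=g+s$, reduces everything to the polynomial inequality
\begin{align*}
(g+s)r\,\alpha^2 + 2(g+s)\alpha\beta + \tfrac{g+s}{r}\beta^2 \ge 4g(\alpha+g-1)(\beta+g-1)
\end{align*}
for all real $\alpha,\beta\ge 1$. The homogeneous quadratic part is positive definite, since after moving the $4g\alpha\beta$ across, its discriminant $4(s-g)^2-4(g+s)^2=-16sg$ is negative for every $r>0$; thus the inequality can only fail through the lower-order terms $4g(g-1)(\alpha+\beta)+4g(g-1)^2$, and controlling these is precisely where the hypothesis on $r$ is used. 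I would treat the left side as a quadratic in $\alpha$ with $\beta$ fixed, bound its minimum over $\alpha\ge 1$, and check that the resulting condition reduces to $r>(g+s)(g-1)/s$. I expect this final verification—tracking the minima along the boundary $\alpha=1$ or $\beta=1$ and confirming that the threshold is exactly $(g+s)(g-1)/s$—to be the main obstacle, being elementary but the only genuinely computational part of the argument.
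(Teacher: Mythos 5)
Your overall architecture (reduce to irreducible non-fiber curves, use adjunction to bound the $\delta$-coefficient, finish with a quadratic inequality in $\alpha,\beta$) matches Vojta's argument as reproduced in the paper, but the genus estimate you extract is too weak, and the polynomial inequality you reduce to is actually false. From $p_a(D)\ge 0$ alone you get $g\gamma^2\le(\alpha+g-1)(\beta+g-1)$, and the target inequality $(g+s)r\alpha^2+2(g+s)\alpha\beta+\tfrac{g+s}{r}\beta^2\ge 4g(\alpha+g-1)(\beta+g-1)$ fails already for $g=2$, $s=1$, $r=3.01$ (just above the threshold $3$), $\alpha=1$, $\beta=5$: the left side is about $63.9$ while the right side is $96$. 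Worse, taking $\alpha=1$ and $\beta=5r/3$ makes the difference equal to $-16r/3-16$, so no strengthening of the hypothesis on $r$ rescues your inequality; the final verification you deferred cannot be completed. The missing idea is that an irreducible curve $C_0$ dominating $C$ under both projections satisfies a much stronger genus bound: the paper applies Riemann--Hurwitz to $p_1\circ\eta:\widetilde{C_0}\to C$, where $\eta$ is the normalization, to get $2p_a(C_0)-2\ge 2p_g(C_0)-2\ge(2g-2)(C_0\cdot e_1)$, which in your notation yields $g\gamma^2\le\alpha\beta+(g-1)\alpha^2$. This bound (note it is asymmetric in $\alpha$ and $\beta$, reflecting the choice of projection) is exactly strong enough: substituting it and requiring the resulting quadratic form in $(\alpha,\beta)$ to take a negative value forces a nonnegative discriminant, which is precisely the condition $r\le(g+s)(g-1)/s$ that the hypothesis excludes.

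A secondary, fixable issue: since $\rho(C\times C)\ge 3$ and can be strictly larger, you cannot write an arbitrary curve class as $\alpha e_1+\beta e_2+\gamma\delta$. You must write $D\equiv\alpha e_1+\beta e_2+\gamma\delta+\nu$ with $\nu$ orthogonal to $e_1,e_2,\delta$, observe that $(Y(r,s)\cdot\nu)=0$ so your intersection formula survives, and invoke the Hodge index theorem to get $(\nu\cdot\nu)\le 0$, which keeps the adjunction estimate going in the right direction. The paper does this explicitly.
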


\noindent In his paper, Vojta only considers the case $a_3 = 1$. For completeness, we sketch (with suitable modifications) the proof of Proposition \ref{vojta} below. 

\begin{proof}[Proof due to Vojta]  Assume, arguing by contradiction, that there exists a curve $C_0$ (not necessarily smooth) on $C \times C$ such that $(C_0 \cdot Y(r,s)) < 0$. We may assume that $C_0$ is irreducible. Note that it is not a fiber of $p_i$ for $i = 1,2$ since $(e_i \cdot Y(r,s)) \geq 0$. Applying the adjunction formula, we get \begin{align*} (C_0^2) + (2g-2)((C_0 \cdot e_1) + (C_0 \cdot e_2)) & =  (C_0^2) + (C_0 \cdot K_{C \times C}) \\ & = 2p_a(C_0) - 2  \\ & \geq 2p_g(C_0) - 2 \\ & \geq (2g - 2)(C_0 \cdot e_1), \end{align*} where $p_a(C_0)$ and $p_g(C_0)$ are the arithmetic and geometric genera\footnote{Recall that the geometric genus of a singular curve is defined as the genus of its normalization.} of $C_0$. Note that the last inequality follows by applying Riemann-Hurwitz to $p_1 \circ \eta : \widetilde{C_0} \to C$, where $\eta : \widetilde{C_0} \to C_0$ is the normalization. The composition $p_1 \circ \eta$ is a finite morphism because $C_0$ is not a fiber of either projection. We can then conclude that \begin{align}\label{eve} (C_0^2) + (2g-2)(C_0 \cdot e_2) \geq 0.\end{align}

\noindent Write $C_0 \equiv b_0\delta + b_1e_1 + b_2e_2 + \nu$ where $\nu$ is orthogonal to $\delta, e_1$ and $e_2$ in $N^1(C \times C)_\Rb$. The Hodge index theorem forces $(\nu \cdot \nu) \leq 0$. Using this and (\ref{eve}), we compute $$2b_1b_2 + (2g - 2)b_1 \geq 2gb_0^2.$$ Since $b_1 \geq 0$ and is an integer (being equal to $(C_0 \cdot e_2)$) we have $b_1^2 \geq b_1$ and can write \begin{align}\label{e} 2b_1b_2 + (2g - 2)b_1^2 \geq 2gb_0^2. \end{align}Now we apply $(C_0 \cdot Y(r,s)) < 0$ which gives \begin{align}\label{eq} b_1\sqrt{(g+s)r} + b_2\sqrt{\frac{g+s}{r}} < 2a_3gb_0. \end{align} Since $b_1, b_2 \geq 0$, the left hand side of (\ref{eq}) is nonnegative. Thus we can square (\ref{eq}) \footnote{This is the only step where $a_3$ makes an appearance and it is immediately being squared. The proof proceeds exactly as in \cite{vojta} from here.}  and combine it with (\ref{e}) to get $$(g+s)(b_2^2/r + 2b_1b_2 + b_1^2r)  < 4g(b_1b_2 + (g-1)b_1^2).$$
 Rearranging this, we get $$b_2^2(g+s)/r + 2b_1b_2(s-g)  + b_1^2((g+s)r - 4g(g-1)) < 0.$$ This is a quadratic form in $b_1, b_2$ and therefore its discriminant must be nonnegative. Solving for $r$ then gives $$r \leq \frac{(g+s)(g-1)}{s}.$$ However this contradicts the hypothesis about $r$. Hence no such $C_0$ can exist and $Y(r,s)$ must be nef. 
\end{proof}

\noindent We use Proposition \ref{vojta} to prove the following result. 
\begin{prop}\label{ray} If $\nu = e_2 + q\delta $ and $q \ne 0$ then $$\nu \notin \NE(C \times C).$$ \end{prop}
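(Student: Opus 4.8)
The plan is to exploit the duality between $\NE(C \times C)$ and $\Nef(C \times C)$: since the Mori cone is the dual of the nef cone under the intersection pairing, the class $\nu$ fails to lie in $\NE(C \times C)$ as soon as we produce a single nef class $N$ with $(N \cdot \nu) < 0$. The natural candidates are Vojta's nef classes $Y(r,s) = a_1 e_1 + a_2 e_2 + a_3 \delta$ from Proposition \ref{vojta}, which carry a free sign $a_3 = \pm 1$ and are nef for every $r > (g+s)(g-1)/s$.

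First I would compute the pairing $(Y(r,s) \cdot \nu)$ directly from the intersection table (\ref{facts}). Because $e_2$ and $\delta$ are orthogonal to everything except for $(e_1 \cdot e_2) = 1$ and $(\delta \cdot \delta) = -2g$, almost all terms vanish and one is left with
\begin{align*} (Y(r,s) \cdot \nu) = a_1 - 2gq\,a_3 = \sqrt{\frac{g+s}{r}} - 2gq\,a_3. \end{align*}
The crucial observation is that the $a_1$ term is positive but tends to $0$ as $r \to \infty$, whereas the sign of the second summand $-2gq\,a_3$ is under our control via $a_3$.

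I would then split on the sign of $q$. If $q > 0$ take $a_3 = 1$, and if $q < 0$ take $a_3 = -1$; in both cases the second summand equals the fixed negative number $-2g|q|$. Hence for any fixed $s > 0$ and all sufficiently large $r$ --- large enough both to meet Vojta's bound $r > (g+s)(g-1)/s$ and to force $\sqrt{(g+s)/r} < 2g|q|$ --- the pairing $(Y(r,s) \cdot \nu)$ is strictly negative, and the nefness of $Y(r,s)$ yields $\nu \notin \NE(C \times C)$. I do not expect a genuine obstacle here: the whole argument is a one-line pairing computation followed by an elementary limit. The only points requiring care are invoking the sign freedom $a_3 = \pm 1$ (precisely the reason Proposition \ref{vojta} was stated for both signs) and checking that the two lower bounds on $r$ can be met simultaneously, which is immediate for $r$ large.
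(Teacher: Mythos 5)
Your proposal is correct and is essentially identical to the paper's own proof: the paper likewise pairs $\nu$ with Vojta's class $Y(r,s)$, chooses $a_3$ so that $a_3q=|q|$, and lets $r\to\infty$ (with $s=1$) to make $(Y(r,s)\cdot\nu)=\sqrt{(g+s)/r}-2|q|g$ negative. No differences worth noting.
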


\begin{proof}If we pick $a_3$ so that $a_3q = |q|$, then \begin{align*} (Y(r,s) \cdot \nu) & =  a_1 - 2gqa_3 \\ &=  \sqrt{\frac{g+s}{r}} - 2|q|g.\end{align*}  Now letting $s= 1$ and $r$ tend to  $\infty$, we get that $\displaystyle \sqrt{\frac{g+s}{r}}$ approaches $0$. This forces $(Y(r,1) \cdot \nu)$ to approach $ -2|q|g < 0$, implying that for $ r \gg 0$, $(Y(r,s) \cdot \nu) < 0$. We conclude that $\nu$ is not pseudoeffective, since its intersection with a nef divisor is negative. 
\end{proof}

\noindent We are now ready to prove Theorem \ref{thm1}. 
\begin{proof}[Proof of Theorem \ref{thm1}] It suffices to apply Proposition \ref{crit} with $h =  \displaystyle \frac{e_1 + e_2}{2}$, $e = e_2$ and $f = \delta$. Proposition \ref{ray} tells us that condition (\ref{b}) in Proposition \ref{crit} is satisfied.\end{proof}

\begin{rem} Observe that for $C/k$, where $k = \bar{k}$ is a field of characteristic $p > 0$, Theorem \ref{thm1} is easily seen to be true because the graph of the $e^\text{th}$ power of Frobenius, denoted by $\Delta_e$, is irreducible and $(\Delta_e \cdot \Delta_e) < 0$. It follows that $\NE(C \times C)$ has infinitely many extremal rays, hence is not polyhedral.
\end{rem}
\bibliography{biblio}
\bibliographystyle{plain}
\end{document}